\newcommand{\bbrack}[1]{{
  \mathchoice
    {\left\lbrack\!\!\left\lbrack #1 \right\rbrack\!\!\right\rbrack} % display style
    {\left\lbrack\!\left\lbrack #1 \right\rbrack\!\right\rbrack} % text style
    {} % script style
    {} % scriptscript style
  }
}
\newtheorem {theorem}{Theorem}
\newtheorem {corollary}{Corollary}
\newtheorem {lemma}{Lemma}
\newtheorem {example}{Example}
\begin{document}
\title{Stirling numbers with higher level and records}
\author[1*]{F. A. Shiha}
\affil[*]{Correspondence: fshiha@mans.edu.eg}

\affil[1]{Department of Mathematics, Faculty of Science, Mansoura University, Mansoura, Egypt}

\date{}
\maketitle
\begin{abstract}
 In this present paper, we show that the Stirling numbers of the first kind with higher level connected with the probability distribution of the number of records and record times in the so-called $F^{\alpha}$-scheme. In addition, we determine the location of the maximum of the Stirling numbers of the first kind with higher level.
\end{abstract}
\maketitle

\bigskip  MSC: 60C05, 62E15, 11B75.

Keywords: Stirling numbers of the first kind, central factorial numbers, records, $F^{\alpha}$-Scheme, maximum of Stirling numbers.
\section {Introduction}
Given a positive integer $s$, the Stirling numbers of the first kind with higher level (level $s$), denoted by $\bbrack{ \begin{smallmatrix} n \\ k \end{smallmatrix}}_s$ were introduced by Tweedie \cite{twee}:
\begin{equation}\label{E:rcent1}
\prod_{i=0}^{n-1}(x+ i^s )=\sum_{k=0}^{n}\bbrack{ \begin{matrix} n \\ k \end{matrix}}_s x^k \qquad, n\geq 0.
\end{equation}
Notice that when $s = 1$, $\bbrack{\begin{smallmatrix} n \\ k \end{smallmatrix}}=(-1)^{n-k}s(n,k)$ are the (unsigned) Stirling numbers of the first kind \cite{comtet74}. For $s=2$, $\bbrack{\begin{smallmatrix} n \\ k \end{smallmatrix}}_2=(-1)^{n-k}\,t(2n,2k)$, where $t(n,k)$ are the central factorial numbers of the first kind given by
\begin{equation}\label{E:cent1}
x^{[n]}=\sum_{k=0}^{n}t(n,k)x^k, \qquad \textnormal{(see \cite{but89, kim2018, riod})},
\end{equation}
where $ x^{[n]}=x\left(x+\frac{n}{2}-1\right)\cdots \left(x-\frac{n}{2}+1\right),\; n \geq 1, \; x^{[0]}=1.$

Recall that $u(n,k)=t(2n,2k)$ are called the central factorial numbers of the first kind with even indices, see, e.g., \cite{gel, merca2016, shiha}.

The numbers ${\bbrack{\begin{smallmatrix} n \\ k \end{smallmatrix}}}_s$ satisfy the recurrence relation
\begin{equation}\label{re1}
  {\bbrack{\begin{matrix} n \\ k \end{matrix}}}_s={\bbrack{\begin{matrix} n-1 \\ k-1 \end{matrix}}}_s+(n-1)^s {\bbrack{\begin{matrix} n-1 \\ k \end{matrix}}}_s,\quad n, k\geq 1,
\end{equation}
with ${\bbrack{\begin{smallmatrix} 0 \\ 0 \end{smallmatrix}}}_s=1$ and  ${ \bbrack{\begin{smallmatrix} n \\ 0 \end{smallmatrix}}}_s={\bbrack{\begin{smallmatrix} 0\\ k \end{smallmatrix}}}_s= 0$  for all $n\geq 1$.

Recently in \cite{kom 2021, t.kom 2021}, combinatorial properties of the numbers ${\bbrack{\begin{smallmatrix} n \\ k \end{smallmatrix}}}_s$ are studied, also some relations with other sequences such as poly-Cauchy numbers with higher level, Bernoulli polynomials, and central factorial numbers are discussed.
In \cite{taka, taka2023}, the poly-Cauchy numbers with level $2$, and with higher level $s$ are introduced and studied as extensions of the original poly-Cauchy numbers.

The numbers $\bbrack{ \begin{smallmatrix} n \\ k \end{smallmatrix}}_s$ have the following special values (see \cite{kom 2021}):
\begin{equation}\label{s1}
 \bbrack{ \begin{matrix} n \\ 1 \end{matrix}}_s=\left((n-1)!\right)^s,
\end{equation}
\begin{equation}\label{s2}
 \bbrack{ \begin{matrix} n \\ 2 \end{matrix}}_s=\left((n-1)!\right)^s\, H_{n-1}^{(s)},
\end{equation}
\begin{equation}
 \bbrack{ \begin{matrix} n \\ 3 \end{matrix}}_s=\left((n-1)!\right)^s\, \sum_{k=1}^{n-2}\, \frac{H_k^{(s)}}{(k+1)^2},
\end{equation}
where $H_n^{(k)}$ are the generalized harmonic numbers of order $k$ defined by
\begin{equation}
H_n^{(k)}=\sum_{\ell=1}^{n}\frac{1}{\ell^k},\quad n > 0,
\end{equation}
with $H_0^{(k)}=0$, and $H_n=H_n^1$ are the harmonic numbers with $H_0=0$.

In the following, we have the first few values of the array $\left(\bbrack{\begin{smallmatrix} n \\ k \end{smallmatrix}}_s\right)_{n,k\geq 0}$ for $0 \leq n, k \leq 5$.
{\footnotesize
\[
\left(
\begin{matrix}
  &1             & 0        &0          &0     &0 &0       \\
  & 0            & 1        & 0         & 0    &0 &0        \\
  & 0            &1        &1          & 0   &0   &0         \\
  & 0            &2^s       & 1+2^s     & 1  &0       &0           \\
  &0             &6^s       &2^s+3^s+6^s   &1+2^s+3^s  &1   &0   \\
  &0             &{24}^s    &6^s+8^s+12^s+24^s  &2^s+3^s+4^s+6^s+8^s+12^s &1+2^s+3^s+4^s  &1
\end{matrix}
\right).
\]}
As there seems to be no symmetry in any row, it is natural to ask where the maximum of each row is. That is, for fixed $n$, what is the value (or
values) of $k$ for which ${\bbrack{\begin{smallmatrix} n \\ k \end{smallmatrix}}}_s$ is maximal. In the present paper, we will answer this question. Also, it is worth to noting that, the classical Stirling numbers and many of their generalizations are very closely connected with records (see \cite{Arnold, Bala, Char2007a, Char2007b, Char2007c, sib}). Similarly, we will show that ${\bbrack{\begin{smallmatrix} n \\ k \end{smallmatrix}}}_s$ describe distributions of some record statistics in the $F^{\alpha}$-scheme.
\section{Some preliminaries}
In this section, we introduce the concepts of the classical record scheme and record in $F^\alpha$-scheme.
Let $X_1, X_2, \ldots $ be a sequence of independent random variables. The random variable $X_j$ is called a record (upper record) if $X_j>X_i$ for all $i=1,2, \ldots, j-1$ (by convention $X_1$ is a record). Let $N_n$ be the number of records among $X_1, X_2, \ldots , X_n$, $n=1,2, \ldots$. And let $L_n$ be the record times, $n=1,2, \ldots$. Then $ L_1=1$ with probability one, and
\begin{equation}\label{rtime}
L_{n+1}=min\{j :X_j>X_{L_n}\},\quad n\geq 1,
\end{equation}
and $X_{L_n}$ the corresponding record values.

The distributions of $N_n$ and $L_n$ are obviously distribution free since they depend
only on the ranks in the sequence $X_1, X_2, \ldots$.

R\'enyi \cite{reny} obtained the following result.
\begin{lemma} Let $X_1, X_2, \ldots$ be independent identically distributed
random variables with a common continuous distribution function $F(x)$ and let
\begin{equation*}
  I_i=\begin{cases}
        1, & \mbox{if } X_i>max\{X_1, X_2, \ldots,X_{i-1}\}, \mbox{ i.e. } X_i \mbox{ is a record }\\
        0, & \mbox{otherwise}.
      \end{cases}
\end{equation*}
Then the indicator variables $I_1, I_2, \ldots$ are independent with
\[
P(I_i=1)=1-P(I_i=0)=\frac{1}{i}, \quad i=1,2,\ldots.
\]
\end{lemma}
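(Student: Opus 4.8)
The plan is to deduce R\'enyi's statement from one elementary fact — that the relative order of i.i.d.\ continuous observations is a uniformly random permutation — by passing through the ``sequential ranks'' of the sequence. First I would fix an arbitrary $n$ and prove the claim for $I_1,\dots,I_n$; since $n$ is arbitrary, independence of the whole infinite sequence follows (independence of a family is equivalent to independence of every finite subfamily).

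Because $F$ is continuous, with probability one the values $X_1,X_2,\dots$ are pairwise distinct, so all ranks below are almost surely well defined and ties may be ignored. For $i=1,\dots,n$ let $R_i=\#\{\, j\le i : X_j\le X_i\,\}$ be the rank of $X_i$ among the first $i$ observations. By the definition of a record, $X_i$ is a record exactly when it is the largest of $X_1,\dots,X_i$, i.e. $I_i=\mathbf 1\{R_i=i\}$. Hence it suffices to show that $R_1,\dots,R_n$ are independent with $R_i$ uniform on $\{1,\dots,i\}$.

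The heart of the argument is to prove that the vector $(R_1,\dots,R_n)$ is uniformly distributed on the product set $\prod_{i=1}^{n}\{1,\dots,i\}$, a set of cardinality $n!$. The key observation is that the map taking the permutation $\pi$ describing the relative order of $X_1,\dots,X_n$ to $(R_1(\pi),\dots,R_n(\pi))$ is a bijection onto this product set: knowing $(R_1,\dots,R_{n-1})$ is equivalent to knowing the relative order of $X_1,\dots,X_{n-1}$, and then $R_n$ merely records in which of the $n$ possible slots $X_n$ is inserted among them. Since $X_1,\dots,X_n$ are i.i.d.\ with a continuous law, each of the $n!$ relative orders has probability $1/n!$, and transporting this uniform law through the bijection shows $(R_1,\dots,R_n)$ is uniform on $\prod_{i=1}^{n}\{1,\dots,i\}$. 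Equivalently one can argue by induction on $n$: the relative order of $X_1,\dots,X_{n-1}$ is a function of $(R_1,\dots,R_{n-1})$; by exchangeability the rank of $X_n$ among all $n$ observations is uniform on $\{1,\dots,n\}$; and the relative order of all $n$ observations (uniform on $n!$ outcomes) is in bijection with the pair (relative order of the first $n-1$, rank of $X_n$ among all $n$), so $P(R_1=r_1,\dots,R_n=r_n)=1/n!$ for every admissible tuple.

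Finally, being uniform on a product set is the same as being independent with uniform marginals; thus $R_1,\dots,R_n$ are independent and $P(R_i=i)=1/i$. Since $I_i=\mathbf 1\{R_i=i\}$ depends on $R_i$ alone, the $I_i$ are independent and $P(I_i=1)=1-P(I_i=0)=1/i$. The only step demanding genuine care is the bijection/counting at the core — checking that the sequential ranks sweep out exactly the product set and that the uniform-permutation law pushes forward correctly — together with the routine remark that continuity of $F$ removes ties almost surely; the rest is bookkeeping.
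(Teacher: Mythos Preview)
Your argument is correct and is essentially the standard proof of R\'enyi's lemma via sequential ranks: continuity of $F$ kills ties almost surely, the map from the relative-order permutation of $X_1,\dots,X_n$ to the vector $(R_1,\dots,R_n)$ of sequential ranks is a bijection onto $\prod_{i=1}^n\{1,\dots,i\}$, and exchangeability makes that permutation uniform on the $n!$ outcomes; hence the $R_i$ are independent and uniform on $\{1,\dots,i\}$, and $I_i=\mathbf 1\{R_i=i\}$ are independent Bernoulli$(1/i)$. The only point worth tightening in a final write-up is the bijection step, but the inductive description you give (insert $X_n$ into one of $n$ slots) is exactly the right justification.

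There is nothing in the paper to compare against: the lemma is stated there as a classical result attributed to R\'enyi and is not proved, merely quoted as background for the $F^{\alpha}$-scheme discussion. What you have written is precisely the self-contained argument one would supply to justify the citation.
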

From this result and the fact that $N_n=\sum_{i=1}^{n}I_i$, R\'enyi \cite{reny} obtained the probability functions of $N_n$ and $L_n$ in terms of the signless Stirling numbers of the first kind.

Nevzorov \cite{nev84, nev86, nev87, nev88} suggested and discussed records in a sequence of independent nonidentically distributed random variables. A generalization of the so-called $F^\alpha$-scheme. Deheuvels and Nevzorov \cite{deh1993} obtained various limit theorems (the central limit theorem, the laws of large numbers, the law of the iterated logarithm) for random variables $N_n$ and $L_n$ in $F^\alpha$-scheme  .

Let $X_1, X_2, \ldots$ be a sequence of independent random variables with distribution functions $F_1(x), F_2(x), \ldots$, we say that this sequence is an $F^\alpha$-scheme if
\begin{equation}
F_{i}(x)=[F(x)]^{\alpha_i},\quad i=1, 2, \ldots,
\end{equation}
where $F(x)$ is an arbitrary continuous distribution function and $\alpha_1, \alpha_2, \ldots$ are positive constants (without loss of generality, assume that $\alpha_1=1$).

\begin{lemma}\textnormal{(Nevzorov \cite{nev86, nev87})}
 In the $F^\alpha$-scheme, the record indicators $I_1, I_2, \ldots$ , which are defined as in Lemma $1$ are also independent and
\begin{equation}\label{inde}
  P(I_i=1)=1-P(I_i=0)=\frac{\alpha_i}{\alpha_1+\alpha_2+\cdots +\alpha_i}, \quad n=1,2,\ldots.
\end{equation}
\end{lemma}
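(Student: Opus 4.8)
The plan is to reduce the lemma to a statement about a race of independent exponential random variables and then to prove the independence by an induction whose hypothesis is strengthened to carry along a Laplace transform. Because the record indicators depend only on the relative ranks of the observations and the $X_j$ have continuous laws, one may pass to the canonical model $Y_i:=F(X_i)$: the $Y_i$ are independent with $P(Y_i\le u)=u^{\alpha_i}$ on $[0,1]$, and $X_i$ is a record iff $Y_i>Y_j$ for all $j<i$. Setting $T_i:=-\log Y_i$ turns this into the cleaner picture that $T_1,T_2,\dots$ are independent with $T_i\sim\mathrm{Exp}(\alpha_i)$ and $X_i$ is a record exactly when $T_i$ is a strict running minimum, $T_i<R_{i-1}$, where $R_{i-1}:=\min(T_1,\dots,T_{i-1})$. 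With $S_i:=\alpha_1+\dots+\alpha_i$, the standard fact that $R_{i-1}\sim\mathrm{Exp}(S_{i-1})$ independently of $T_i\sim\mathrm{Exp}(\alpha_i)$ immediately yields the marginal probability $P(I_i=1)=P(T_i<R_{i-1})=\alpha_i/S_i=:p_i$, which is the first half of the assertion.

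For the independence I would prove, by induction on $n$, the following strengthening: for every admissible pattern $e_1=1,\,e_2,\dots,e_n\in\{0,1\}$ and every $\lambda\ge 0$,
\begin{equation*}
 E\!\left[\mathbf 1\{I_1=e_1,\dots,I_n=e_n\}\,e^{-\lambda R_n}\right]=\frac{S_n}{S_n+\lambda}\,\prod_{i=1}^{n}p_i^{\,e_i}\,(1-p_i)^{1-e_i},\qquad R_n:=\min(T_1,\dots,T_n).
\end{equation*}
Taking $\lambda=0$ recovers exactly the statement that $I_1,\dots,I_n$ are independent with $P(I_i=1)=p_i$. The base case $n=1$ is $E[e^{-\lambda T_1}]=\alpha_1/(\alpha_1+\lambda)=S_1/(S_1+\lambda)$. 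For the inductive step I would condition on $(T_1,\dots,T_{n-1})$, which determines $I_1,\dots,I_{n-1}$ and $R_{n-1}$, use $R_n=\min(R_{n-1},T_n)$ and $I_n=\mathbf 1\{T_n<R_{n-1}\}$, and split into the cases $e_n=1$ and $e_n=0$. In each case the inner expectation over $T_n\sim\mathrm{Exp}(\alpha_n)$ is an elementary exponential integral that reduces the whole quantity to terms of the form $E[\mathbf 1\{I_1=e_1,\dots,I_{n-1}=e_{n-1}\}\,e^{-\mu R_{n-1}}]$ with $\mu\in\{0,\,\lambda+\alpha_n\}$; feeding in the induction hypothesis and simplifying, using $S_n=S_{n-1}+\alpha_n$ and $1-p_n=S_{n-1}/S_n$, produces the claimed right-hand side in both cases.

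The one point requiring care — and the reason for the auxiliary factor $e^{-\lambda R_n}$ — is that plain independence of $I_1,\dots,I_{n-1}$ does not close the recursion: removing $I_n=\mathbf 1\{T_n<R_{n-1}\}$ needs the conditional law of $R_{n-1}$ given the record pattern of $X_1,\dots,X_{n-1}$, and a priori this could depend on the pattern. The strengthened hypothesis asserts that it does not: conditioning on which of $X_1,\dots,X_{n-1}$ are records leaves $R_{n-1}$ exactly $\mathrm{Exp}(S_{n-1})$, which is a form of the memorylessness of the exponential race; everything else is routine. An alternative route that avoids transforms is to expand $P(I_1=e_1,\dots,I_n=e_n)$ directly as an iterated integral over $(Y_1,\dots,Y_n)\in[0,1]^n$ — the event says the successive record values increase while each non-record $Y_j$ stays below the value currently held by the reigning record — and then integrate the $Y_i$ in order of increasing index, whereupon the integral telescopes to $\prod_{i=1}^n p_i^{e_i}(1-p_i)^{1-e_i}$; the only fiddly part there is tracking which earlier record value each non-record must lie below.
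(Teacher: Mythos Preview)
The paper does not prove this lemma at all: it is stated as Nevzorov's result with references \cite{nev86,nev87} and used as a black box, so there is no ``paper's proof'' to compare against. Your proposal therefore supplies an argument the paper omits.

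Your argument is correct. The reduction to independent exponentials $T_i\sim\mathrm{Exp}(\alpha_i)$ via $T_i=-\log F(X_i)$ is the right normalization, and the marginal computation $P(T_i<R_{i-1})=\alpha_i/S_i$ is standard. The strengthened induction hypothesis
\[
E\!\left[\mathbf 1\{I_1=e_1,\dots,I_n=e_n\}\,e^{-\lambda R_n}\right]=\frac{S_n}{S_n+\lambda}\,\prod_{i=1}^{n}p_i^{\,e_i}(1-p_i)^{1-e_i}
\]
is exactly what is needed, and both branches of the inductive step check: for $e_n=1$ the inner integral gives $\tfrac{\alpha_n}{\alpha_n+\lambda}\bigl(1-e^{-(\alpha_n+\lambda)R_{n-1}}\bigr)$, and after applying the hypothesis at $\mu=0$ and $\mu=\alpha_n+\lambda$ one obtains $\alpha_n/(S_n+\lambda)$ times the product, which equals $\tfrac{S_n}{S_n+\lambda}\,p_n$ times the product; for $e_n=0$ one gets $e^{-(\alpha_n+\lambda)R_{n-1}}$ directly, and the hypothesis at $\mu=\alpha_n+\lambda$ gives $S_{n-1}/(S_n+\lambda)=\tfrac{S_n}{S_n+\lambda}(1-p_n)$ times the product. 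Your diagnosis of why the Laplace factor is necessary---that the recursion needs the conditional law of $R_{n-1}$ given the record pattern, not just the pattern probabilities---is also on point: the hypothesis encodes that $R_n$ is $\mathrm{Exp}(S_n)$ \emph{and} independent of $(I_1,\dots,I_n)$, which is the real content here. The alternative iterated-integral route you sketch is essentially Nevzorov's original computation and would work as well.
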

Thus, for an $F^{\alpha}$-scheme, the number of records $ N_n $ can also be represented as a sum $I_1+I_2+\cdots +I_n$ of independent indicators. Balakrishnan and Nevzorov \cite{Bala} expressed the probability function of $N_n$ and $L_n$ in terms of generalized signless Stirling numbers of the first kind. In a geometrically increasing population, Charalambides \cite{Char2007c} expressed the probability functions and binomial moments of $N_n$ and $L_n$ in terms of the unsigned $q$-Stirling numbers of the first kind. Ahsanullah \cite{ahs2017} showed how to express distributions of records based on sequences of nonidentically distributed exponential random variables via distributions of independent random summands.
\section{ Distributions of the number of records and record times}
In this section, we obtain the probability function of the number of records $N_n$ and the record times $L_n$ in terms of the Stirling numbers of the first kind with level $s$.

Let us introduce independent random variables $I_i$, $i=1, 2, \ldots$ each of them taking values $0$ and $1$ with probabilities
\begin{equation}\label{E:ind}
 p_i= P(I_i=1) =1-P(I_i=0) =\frac{1}{1+(i-1)^s},\quad i=1,2,\ldots.
\end{equation}
Consider a sequence of independent but not identically random variables $X_1, X_2,\ldots $ have the distribution functions $F(x)^{\alpha_1}, F(x)^{\alpha_2}, \ldots$, respectively, where $F(x)$ is an arbitrary continuous function, while $\alpha_i$ are positive numbers, and
\begin{equation}\label{ind}
  p_i=P(I_i=1)=\frac{\alpha_i}{\alpha_1+\alpha_2\cdots +\alpha_i},\quad i=1,2,\ldots.
\end{equation}
The sequence $\{\alpha_i\}_{i=1}^\infty$ are determined conversely from $p_i, i=2,3, \ldots$ by
\begin{equation}
 \alpha_i=(\alpha_1+\alpha_2+\cdots +\alpha_{i-1})\frac{p_i}{1-p_i},\quad i=2,3,\ldots.
\end{equation}
 Starting from $\alpha_1=1$, we get
 \[
  \alpha_2=1,\quad \alpha_3=\frac{2}{2^s}, \quad \alpha_4=\frac{2(1+2^s)}{2^s(3^s)},\quad \alpha_5=\frac{2(1+2^s)(1+3^s)}{2^s(3^s)(4^s)}
 \]
Consequently, we obtain the coefficients of the $F^\alpha$-scheme in the form
\begin{equation}\label{alpha}
 \alpha_1=1,\quad \alpha_i=\prod_{j=2}^{i}\frac{(1+(j-2)^s)}{(j-1)^s}, \quad i=2,3,\ldots.
\end{equation}
Hence
\begin{equation}\label{alpha1}
  \alpha_{i+1}=\prod_{j=2}^{i+1}\frac{(1+(j-2)^s)}{(j-1)^s}=\alpha_i \,\frac{1+(i-1)^s}{i^s},
\end{equation}
then the sequence $\{\alpha_i\}_{i=1}^\infty$ satisfies the first order recurrence
\[
i^s \,\alpha_{i+1}=(1+(i-1)^s)\,\alpha_i \quad \Rightarrow \alpha_i=i^s \, \alpha_{i+1}-(i-1)^s\, \alpha_i, \quad \alpha_1=1.
\]
Summing for $i=1, 2,\ldots,j$ yields,
\begin{equation}\label{alphaj}
\sum_{i=1}^{j} \alpha_i=\sum_{i=1}^{j}\big( i^s \alpha_{i+1}-(i-1)^s \alpha_i\big)=j^s\,\alpha_{j+1}.
\end{equation}
Let us denote $A_0=0$,  $A_j=\sum_{i=1}^{j}\alpha_i$, then $A_j$ is a strictly increasing function and from (\ref{alphaj}), we get
\begin{equation}
\frac{A_j}{\alpha_{j+1}}=j^s, \quad j \geq 1.
\end{equation}
Since the number of records $N_n$ is obtained by $N_n=\sum_{i=1}^{n}I_i$, then the probability generating function of $N_n$ will be the product of the $n$ generating functions of the Bernoulli random variables $I_1, I_2, \ldots, I_n$,
 \begin{equation*} \label{E:pd}
 \begin{split}
 \textnormal{E}(t^{N_n})&= \prod_{i=1}^{n} (1-p_i+p_it)=\prod_{i=1}^{n}p_i\prod_{i=1}^{n}\left(t+\frac{1-p_i}{p_i}\right)\\
&=\prod_{i=1}^{n}p_i\prod_{i=1}^{n}\left(t+\frac{\alpha_1+\alpha_2+\cdots +\alpha_{i-1}}{\alpha_i}\right)\\
&=\prod_{i=1}^{n}p_i\prod_{i=1}^{n}\left(t+\frac{A_{i-1}}{\alpha_i}\right)
%=\prod_{i=1}^{n}p_i\prod_{i=0}^{n-1}\left(t+\frac{A_{i}}{\alpha_{i+1}}\right) \\
=\prod_{i=1}^{n}p_i\prod_{i=1}^{n}(t+(i-1)^s)\\
&=\prod_{i=1}^{n}p_i \, \sum_{k=0}^{n} {\bbrack{\begin{matrix} n \\ k \end{matrix}}}_s \,t^k
=\sum_{k=0}^{n}\, t^k\,{\bbrack{\begin{matrix} n \\ k \end{matrix}}}_s \, \prod_{i=1}^{n}p_i
\end{split}
\end{equation*}
Using the fact that $\textnormal{E}(t^{N_n})=\sum_{k=0}^{n} t^k P(N_n=k)$, we obtain
\begin{equation}\label{pn}
  P(N_n=k)= {\bbrack{\begin{matrix} n \\ k \end{matrix}}}_s \,\prod_{i=1}^{n}p_i
  =\frac{{\bbrack{\begin{smallmatrix} n \\ k \end{smallmatrix}}}_s}{\prod_{i=1}^{n}(1+(i-1)^s)}, \qquad 1\leq k \leq n.
\end{equation}
The expected value (mean) of the number of records $N_n$ can be obtained by
\begin{equation}\label{mean1}
 \textnormal{E}(N_n)=\sum_{k=1}^{n}k P(N_n=k)=\frac{1}{\prod_{i=1}^{n}(1+(1-i)^s)} \,\sum_{k=1}^{n}k\,{\bbrack{\begin{matrix} n \\ k \end{matrix}}}_s.
\end{equation}
Now the probability function of $L_k$ may be deduced from the probability function of
the number $N_n$ of records up to time $n$.  Since the following equality hold for any $n, = 1, 2, \ldots$ and $k, = 1, 2, \ldots$
\[
P(L_k=n)= P(N_{n-1}=k-1, N_n=k)=P(N_{n-1}=k-1, I_n=1),
\]
where the events $ \{N_{n-1}=k-1 \} $ and $ \{I_n=1 \}$ are independent in the $F^{\alpha}$-scheme, it follows from (\ref{pn}) that
\begin{equation}\label{pn2}
\begin{split}
 P(L_k=n)&= P(N_{n-1}=k-1)P(I_n=1)\\
 &= \frac{{\bbrack{\begin{smallmatrix} n-1 \\ k-1 \end{smallmatrix}}}_s}{\prod_{i=1}^{n}(1+(i-1)^s)},\quad n=k,k+1,\ldots.
\end{split}
\end{equation}
\section{The Maximum of ${\bbrack{\begin{smallmatrix} n \\ k \end{smallmatrix}}}_s$}
Determining the maximum value of Stirling numbers is an interesting problem to consider. The Stirling numbers form a strongly log-concave sequence \cite{harp, lieb}. Because of the log-concavity of Stirling numbers, it is enough to locate a maximum \cite{erd, ham}. Mez\"o \cite{mezo} gave estimations of the largest index for which the sequence of $r$-Stirling numbers assume its maximum. Corcino et al. \cite{core} established an estimating index at which the maximum value of the sequence of noncentral Stirling numbers of the first kind occurs.

In this section, we determine the location of the maximum of the Stirling numbers of the first kind with level $s$ and the corresponding maximum value.

We say that the sequence of real numbers $\{a_{i}\}_{i=1}^{n}$ is
unimodal if there exists an index $k$ such that $1 \leq k \leq n$, and $a_1\leq a_2 \leq \cdots \leq a_k \geq a_{k+1} \geq \cdots \geq a_n $.
A stronger property, called log-concavity, implies the unimodality. The sequence $\{a_{i}\}_{i=1}^{n}$  is called log-concave if
\begin{equation}\label{log}
  (a_k)^2 \geq a_{k-1}\, a_{k+1},  \qquad \textnormal{for} \; k=2, 3, \ldots, n-1.
\end{equation}
The sequence is strictly log-concave if we replace the $ ``\geq"$ by $``>"$ in (\ref{log}). The following theorem of Newton gives a useful strategy for proving the strict log-concavity, see \cite{bona, lieb, wilf}.
\begin{theorem} \label{tlog}
If the polynomial $ \sum_{k=1}^{n}a_k x^k$ has only real roots, then the sequence $\{a_{k}\}_{k=1}^{n}$ is strictly log-concave and it is also unimodal.
\end{theorem}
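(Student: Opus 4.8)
The plan is to derive this from the classical \emph{Newton inequalities}, proved by an elementary reduction (via Rolle's theorem) to the case of a quadratic. First I would clear the factor $x$: write $\sum_{k=1}^{n}a_k x^{k}=x\,Q(x)$ with $Q(x)=\sum_{j=0}^{m}c_j x^{j}$, $c_j=a_{j+1}$, and $m=\deg Q$ (so $m=n-1$ when $a_n\neq 0$, which is the case in all applications made here, since $\bbrack{\begin{smallmatrix} n \\ n \end{smallmatrix}}_s=1$). The hypothesis says $Q$ has only real roots, and log-concavity of $\{a_k\}_{k=1}^{n}$ is literally log-concavity of $\{c_j\}_{j=0}^{m}$. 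Since the conclusion is used in this paper only for sequences of positive numbers, I will argue in that case; in particular I may assume no $c_j$ vanishes (if some $c_{j-1}$ or $c_{j+1}$ is zero the inequality to be proved is trivial).

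Next I would record the two operations on a real-coefficient polynomial that preserve real-rootedness: differentiation (by Rolle's theorem, counting multiplicities, $P'$ again has all its roots real) and the \emph{reversal} $P(x)\mapsto x^{\deg P}P(1/x)$ (the roots get inverted). Fix $j$ with $1\le j\le m-1$ and apply to $Q$ the composition ``differentiate $j-1$ times, then reverse, then differentiate $m-1-j$ times''. Each step keeps all roots real and the degree drops to $2$; a short bookkeeping with factorials shows the resulting quadratic is
\[
\alpha x^{2}+\beta x+\gamma,\qquad
\alpha=\tfrac{(j-1)!\,(m-j+1)!}{2}\,c_{j-1},\quad
\beta=j!\,(m-j)!\,c_{j},\quad
\gamma=\tfrac{(j+1)!\,(m-j-1)!}{2}\,c_{j+1}.
\]
Because this quadratic has real roots its discriminant is nonnegative, $\beta^{2}\ge 4\alpha\gamma$, and after cancelling factorials this is exactly Newton's inequality
\[
c_j^{2}\ \ge\ \frac{(j+1)(m-j+1)}{j(m-j)}\;c_{j-1}c_{j+1},\qquad 1\le j\le m-1 .
\]

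The factor $\tfrac{(j+1)(m-j+1)}{j(m-j)}$ is strictly larger than $1$ on the whole range, so for positive coefficients one gets the \emph{strict} inequality $c_j^{2}>c_{j-1}c_{j+1}$; translating back through $c_j=a_{j+1}$ gives $(a_k)^2>a_{k-1}a_{k+1}$ for $2\le k\le n-1$, which is the claimed strict log-concavity. Unimodality is then immediate: for a positive log-concave sequence the successive ratios $a_{k+1}/a_k$ are non-increasing, so once such a ratio falls below $1$ it stays below $1$, and the sequence rises to a peak and then falls. The one genuinely non-routine point is the middle step: checking that the differentiate/reverse/differentiate reduction lands on precisely the displayed quadratic (so that its discriminant condition is exactly Newton's inequality), and that the degree decreases as claimed at every stage so that Rolle's theorem really does keep all the roots real throughout.
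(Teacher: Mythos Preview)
Your argument is the standard proof of Newton's inequalities via Rolle's theorem and reversal, and the bookkeeping is correct: differentiating $Q$ exactly $j-1$ times, reversing, and differentiating $m-j-1$ further times does yield the quadratic with the displayed coefficients, whence the discriminant condition gives $c_j^{2}\ge \tfrac{(j+1)(m-j+1)}{j(m-j)}c_{j-1}c_{j+1}$ and strict log-concavity follows for positive coefficients. The only caveat is that you lean on the positivity of all $c_j$ to keep the degrees honest through the reversal step; you flag this and it is satisfied in the application to $\bbrack{\begin{smallmatrix} n \\ k \end{smallmatrix}}_s$.

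As for comparison: the paper does not prove this theorem at all. It is quoted as a classical result of Newton with references to B\'ona, Lieb, and Wilf, and then applied directly. So you have supplied a proof where the paper simply cites one; your route is exactly the one found in those references.
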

 For any fixed positive integer $n$, the polynomial given in (\ref{E:rcent1}) has only nonpositive real roots. Theorem \ref{tlog} immediately yields that the coefficient sequence $\{{\bbrack{ \begin{smallmatrix} n \\ k \end{smallmatrix}}}_s\}_{k=0}^n$ is strictly log-concave:
\begin{equation}
  \left({\bbrack{\begin{matrix} n \\ k \end{matrix}}}_s\right)^2-{\bbrack{\begin{matrix} n \\ k-1 \end{matrix}}}_s \; {\bbrack{\begin{matrix} n \\ k+1 \end{matrix}}}_s >0,\quad k=1,2,\ldots  ,n-1,
\end{equation}
and therefore unimodal. According to Hammersley \cite{ham} and Erdos \cite{erd} there exist a unique index $k_{n,s}$ of the maximal Stirling number of the first kind with level $s$ for all fixed  $n\geq3$
\[
{\bbrack{\begin{matrix} n \\ 1 \end{matrix}}}_s < {\bbrack{\begin{matrix} n \\ 2 \end{matrix}}}_s < \cdots < {\bbrack{\begin{matrix} n \\ k_{n,s}-1 \end{matrix}}}_s <  {\bbrack{\begin{matrix} n \\ k_{n,s} \end{matrix}}}_s >{\bbrack{\begin{matrix} n \\ {k_{n,s}+1} \end{matrix}}}_s > \cdots > {\bbrack{\begin{matrix} n \\ n \end{matrix}}}_s.
\]
To find the maximizing index, we use the following theorem of Darroch \cite{bona, dar}.
\begin{theorem}\label{mod}
  Let $G(x)=\sum_{k=0}^{n}a_k\,x^k$ be a polynomial that has real roots only
that satisfies $G(1)>0$. Let $k_n$ be the maximizing index for the sequence of the coefficients of $G(x)$. Let $\mu = G'(1)/G(1)$.
Then we have
\begin{equation}
|k_n-\mu|<1.
\end{equation}
\end{theorem}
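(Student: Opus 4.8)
The plan is to recognise this as the statement --- implicit in Section~3 --- that a mode of a Poisson--binomial distribution lies within distance $1$ of its mean, and then to reduce the two-sided bound to a single one-sided estimate. First I would normalise: since $G(1)>0$, dividing $G$ by $G(1)$ changes neither the maximising index $k_n$ nor the ratio $\mu=G'(1)/G(1)$, so assume $G(1)=1$; and we may assume the coefficients are nonnegative (the only case needed), whence all zeros of $G$ are $\le 0$, say $-\theta_1,\dots,-\theta_n$ with $\theta_i\ge 0$. Putting $p_i:=1/(1+\theta_i)\in(0,1]$ gives
\[
G(x)=\prod_{i=1}^{n}\frac{x+\theta_i}{1+\theta_i}=\prod_{i=1}^{n}\bigl(p_ix+(1-p_i)\bigr),
\]
which is the probability generating function of $S=X_1+\cdots+X_n$ with independent $X_i\sim\mathrm{Bernoulli}(p_i)$. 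Hence $a_k=P(S=k)$, so $k_n$ is a mode of $S$ and $\mu=G'(1)=\mathrm E(S)=\sum_i p_i$, and the claim becomes $|k_n-\mathrm E(S)|<1$.

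Next I would use symmetry. Replacing each $X_i$ by $1-X_i$ turns $S$ into $n-S$, turns $\mu$ into $n-\mu$, and turns a mode $m$ of $S$ into the mode $n-m$ of $n-S$. Therefore it suffices to prove the single inequality $\mathrm E(S)<m+1$ for every mode $m$ of every such sum $S$; applying this to $n-S$ then yields $n-\mu<(n-m)+1$, i.e.\ $\mu>m-1$, and the two bounds combine to $|k_n-\mu|<1$.

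For the surviving inequality I would analyse the ratios $\rho_k:=a_k/a_{k-1}$. Newton's inequalities for the real-rooted polynomial $G$ --- equivalently, the ultra-log-concavity $a_k^2\ge\frac{(k+1)(n-k+1)}{k(n-k)}\,a_{k-1}a_{k+1}$, which is strictly stronger than the log-concavity of Theorem~\ref{tlog} --- force $\rho_k$ to be decreasing, and at a quantified rate; the mode $m$ is characterised by $\rho_m\ge 1\ge\rho_{m+1}$, and the task is to convert this local information, through $\mathrm E(S)=\sum_k k\,a_k$, into the global bound $\mathrm E(S)<m+1$. This is Darroch's original computation, and I expect it to be the main obstacle. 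The reason it is delicate is that $\mathrm E(S)-m=\sum_{j\ge 1}j\,(a_{m+j}-a_{m-j})$ is a difference of two large, nearly equal quantities, so any argument bounding the right and left tails separately --- say by geometric estimates built from $\rho_{m+1}$ and $\rho_m$ --- is hopelessly lossy; one must genuinely exploit the ultra-log-concavity of $(a_k)$, not merely its unimodality. Since the statement is classical, an alternative is simply to invoke it from \cite{dar,bona}; the point of the reduction above is that it makes the estimate directly applicable to each row of the array $\bigl(\bbrack{\begin{smallmatrix}n\\k\end{smallmatrix}}_s\bigr)$, which by (\ref{pn}) is, up to a constant factor, the law of the Poisson--binomial variable $N_n$.
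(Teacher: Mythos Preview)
The paper does not prove this theorem at all: it is quoted as Darroch's theorem and attributed to \cite{dar,bona}, with no argument given. So there is nothing to compare your proposal against on the paper's side; your sketch already goes considerably further than the paper does.

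That said, a brief remark on your sketch itself. Your reduction to the Poisson--binomial setting is exactly right and is, in fact, how the paper \emph{applies} the theorem (via (\ref{pn}) the row $\bigl(\bbrack{\begin{smallmatrix}n\\k\end{smallmatrix}}_s\bigr)_k$ is, up to normalisation, the law of $N_n$). Your symmetry reduction to the one-sided bound $\mathrm E(S)<m+1$ is also standard and correct. The only soft spot is the phrase ``we may assume the coefficients are nonnegative (the only case needed)'': as the theorem is literally stated in the paper, $G(1)>0$ together with real-rootedness does not force nonnegative coefficients, so this is a genuine additional hypothesis rather than a harmless normalisation. It is, however, the hypothesis under which Darroch's theorem is actually proved in \cite{dar,bona}, and it holds in every application made in the paper, so treating it as part of the statement is entirely appropriate. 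Beyond that, you correctly identify the remaining estimate as the substantive content of Darroch's argument and are candid that you would invoke it; since the paper itself simply cites the result, that is at least as much as is required here.
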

Let $G_s(x)=\sum_{k=0}^{n}{\bbrack{\begin{smallmatrix} n \\ k \end{smallmatrix}}}_s \, x^k$ and $\mu_{n,s}=G'(1)/G(1)$.
So if we can compute $\mu_{n,s}$, we need to check at most two values of ${\bbrack{\begin{smallmatrix} n \\ k \end{smallmatrix}}}_s$ to find the maximum of the sequence $\{{\bbrack{\begin{smallmatrix} n \\ k \end{smallmatrix}}}_s\}_{k=1}^n$. In the next theorem, we compute $\mu_{n,s}$, and then we can locate (up to distance $1$) the maximum of the sequence $\{{\bbrack{\begin{smallmatrix} n \\ k \end{smallmatrix}}}_s\}_{k=1}^n$ for any fixed $n>2$.
\begin{theorem}
For any fixed positive integer $n >2$, let $k_{n,s}$ be the largest index for which the sequence $\{{\bbrack{\begin{smallmatrix} n \\ k \end{smallmatrix}}}_s\}_{k=1}^n$
 assumes its maximum, then
\begin{equation}
  \left|k_{n,s}-\sum_{i=1}^{n}\frac{1}{1+(i-1)^s}\right|<1.
\end{equation}
\end{theorem}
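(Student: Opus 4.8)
The plan is to apply Darroch's theorem (Theorem \ref{mod}) to the generating polynomial $G_s(x)=\sum_{k=0}^{n}\bbrack{\begin{smallmatrix} n \\ k \end{smallmatrix}}_s x^k$; once its hypotheses are checked and $\mu_{n,s}=G_s'(1)/G_s(1)$ is evaluated in closed form, the stated bound is immediate.

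First I would use the defining identity \eqref{E:rcent1}, which gives the fully factored form $G_s(x)=\prod_{i=0}^{n-1}(x+i^s)$. This exhibits that $G_s$ has only real roots, namely the nonpositive numbers $-i^s$ for $0\le i\le n-1$, and that $G_s(1)=\prod_{i=0}^{n-1}(1+i^s)>0$, so Theorem \ref{mod} applies to $G_s$. Since $\bbrack{\begin{smallmatrix} n \\ 0 \end{smallmatrix}}_s=0$ for $n\ge 1$, the maximizing index in fact lies in $\{1,\dots,n\}$, matching the statement of the theorem; one may equivalently factor $G_s(x)=x\prod_{i=1}^{n-1}(x+i^s)$ without affecting real-rootedness.

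Next I would compute $\mu_{n,s}$ by logarithmic differentiation of the product: from $\log G_s(x)=\sum_{i=0}^{n-1}\log(x+i^s)$ one gets $G_s'(x)/G_s(x)=\sum_{i=0}^{n-1}1/(x+i^s)$, and evaluation at $x=1$ together with the reindexing $i\mapsto i-1$ yields $\mu_{n,s}=\sum_{i=0}^{n-1}\frac{1}{1+i^s}=\sum_{i=1}^{n}\frac{1}{1+(i-1)^s}$. Darroch's theorem then gives $|k_{n,s}-\mu_{n,s}|<1$, which is exactly the assertion. As a consistency check, this quantity is also $\mathrm{E}(N_n)=\sum_{i=1}^{n}p_i$ with the $p_i$ of \eqref{E:ind}, in agreement with \eqref{mean1} once one notes $G_s'(1)=\sum_{k=1}^{n}k\,\bbrack{\begin{smallmatrix} n \\ k \end{smallmatrix}}_s$ and $G_s(1)=\prod_{i=1}^{n}(1+(i-1)^s)$.

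There is no genuinely hard step here: the substance has already been supplied by the product representation \eqref{E:rcent1} and by Darroch's theorem. The only points that need a little care are the index shift between $\prod_{i=0}^{n-1}$ in \eqref{E:rcent1} and $\sum_{i=1}^{n}$ in the statement, and applying Theorem \ref{mod} to the full polynomial $G_s$ (including its vanishing constant term) so that the numerator and denominator defining $\mu_{n,s}$ are taken from the very polynomial whose coefficient sequence is being maximized.
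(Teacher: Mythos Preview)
Your proof is correct. Both you and the paper apply Darroch's theorem (Theorem~\ref{mod}) to the polynomial $G_s(x)=\sum_{k=0}^{n}\bbrack{\begin{smallmatrix} n \\ k \end{smallmatrix}}_s x^k$, so the overall architecture is the same; the difference lies in how $\mu_{n,s}=G_s'(1)/G_s(1)$ is evaluated. You take the direct calculus route: from the factorization $G_s(x)=\prod_{i=0}^{n-1}(x+i^s)$ you obtain $G_s'(x)/G_s(x)=\sum_{i=0}^{n-1}(x+i^s)^{-1}$ by logarithmic differentiation and set $x=1$. The paper instead computes $\mu_{n,s}$ probabilistically, identifying it with $\textnormal{E}(N_n)$ via \eqref{mean1} and then using the Poisson--binomial representation $\textnormal{E}(N_n)=\sum_{i=1}^{n}p_i$ from \eqref{mean2}. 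Your argument is shorter and entirely self-contained (it does not rely on the record model of Section~3), while the paper's argument has the virtue of tying the location of the maximum to the expected number of records, which is the thematic point of the article. You already note this connection as a consistency check, so nothing is lost.
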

\begin{proof} For $n>2$, we have
\[
G_s(1)=\sum_{k=0}^{n}{\bbrack{\begin{matrix} n \\ k \end{matrix}}}_s=\prod_{i=1}^{n}\left(1+(i-1)^s\right)>0,
\]
and
\[
{G'}_s(x)=\sum_{k=0}^{n} k \,{\bbrack{\begin{matrix} n \\ k \end{matrix}}}_s x^{k-1}\quad \Rightarrow \quad {G'}_s(1)=\sum_{k=0}^{n} k\,{\bbrack{\begin{matrix} n \\ k \end{matrix}}}_s.
\]
To compute ${G'}_s(1)$, we notice that $\textnormal{E}(N_n)$ which given in (\ref{mean1}) can be also given in an equivalent form by the fact that, the random variable $N_n$ is represented as a sum of independent nonidentically Bernoulli random variables. Then the random variable $N_n$ has a Poisson-binomially distribution with mean given by
\begin{equation}{\label{mean2}}
\textnormal{E}(N_n)=\textnormal{E}(\sum_{i=1}^{n}I_i)=\sum_{i=1}^{n}\textnormal{E}(I_i)
=\sum_{i=1}^{n} p_i=\sum_{i=1}^{n}\frac{1}{1+(i-1)^s},
\end{equation}
From (\ref{mean1}) and (\ref{mean2}), we obtain
\[
\frac{1}{\prod_{i=1}^{n}(1+(1-i)^s)} \,\sum_{k=1}^{n}k\,{\bbrack{\begin{matrix} n \\ k \end{matrix}}}_s =\sum_{i=1}^{n} \frac{1}{1+(i-1)^s}.
\]
Hence, we get
\begin{equation}
G'(1)=\sum_{k=1}^{n}k\,{\bbrack{\begin{matrix} n \\ k \end{matrix}}}_s =\prod_{i=1}^{n}\left(1+(1-i)^s\right)\sum_{i=1}^{n}\frac{1}{1+(i-1)^s}.
\end{equation}
Then
\begin{equation}\label{mu}
  \mu_{n,s}=\frac{G'(1)}{G(1)}=\sum_{i=1}^{n}\frac{1}{1+(i-1)^s}.
\end{equation}
\end{proof}
Notice that the values of $\mu_{n,s}$ can be easily computed for any $n$ and $s$, and the unique value of $k_{n,s}$ for which ${\bbrack{\begin{smallmatrix} n \\ k_{n,s} \end{smallmatrix}}}_s$ is maximal is within distance $1$ of $\mu_{n,s}$.

\textbf{Some special cases}
Using the recurrence (\ref{re1}) and any computer algebra system, one can easily obtain the values of the arrays ${\bbrack{\begin{smallmatrix} n \\ k \end{smallmatrix}}}_s$ for any $n$ and $s$.

\textbf{Case 1:} When $s=1$, we have ${\bbrack{\begin{smallmatrix} n \\ k \end{smallmatrix}}}_1=|s(n,k)|$, the signless Stirling numbers of the first kind, in this case $\mu_{n,1}=\sum_{i=1}^{n}\frac{1}{i}=H_n$, which coincide with \cite{bona} result.

\begin{example}
  For $n=6$, we get $\mu_{6,1}=2.45$ and hence the maximizing index of the sequence $\{|s(6,k)|\}_{k=1}^6$  satisfies $|k_{6,1}-2.45|<1$, then $k_{6,1}=2$ or $3$. Therefore, we check only the two values, $|s(6,2)|$ and $|s(6,3)|$. In fact $|s(6,2)|=274$ is the maximal.
\end{example}
\begin{example} For $n=100$, we have $\mu_{100,1}=5.187$, then $k_{100,1}=5$ or $6$. Indeed $|s(100,5)|=1.971\cdot 10^{157}$ is the maximum of the sequence $\{|s(100,k)|\}_{k=1}^{100}$.
\end{example}

\textbf{Case 2:} When $s=2$, we have ${\bbrack{\begin{smallmatrix} n \\ k \end{smallmatrix}}}_2=(-1)^{n-k}\, u(n,k)$, where $u(n,k)$ are the central factorial numbers of the first kind with even indices.

\begin{example}
For $n=5$, we have $\mu_{5,2}=\sum_{i=1}^{5}\frac{1}{1+(i-1)^2}=1.859$, and the maximizing index of the sequence $\{{\bbrack{\begin{smallmatrix} 5 \\ k \end{smallmatrix}}}_2\}_{k=1}^5$ satisfies $|k_{5,2}-1.859|<1$, then $k_{5,2}=1$ or $2$.  Therefore, we check only the values ${\bbrack{\begin{smallmatrix} 5 \\ 1 \end{smallmatrix}}}_2$ and ${\bbrack{\begin{smallmatrix} 5 \\ 2 \end{smallmatrix}}}_2$. In fact, ${\bbrack{\begin{smallmatrix} 5 \\ 2 \end{smallmatrix}}}_2=820$ is the maximal.
\end{example}
%\begin{example}
%For $n=30$, we have $\mu_{30,2}=2.043$, then the value of the maximizing index must be equal to $2$ or $3$. Indeed,  ${\bbrack{\begin{smallmatrix} 30 \\ 2 %\end{smallmatrix}}}_2=1.259\cdot 10^{62}$ is the maximum of the sequence $\{{\bbrack{\begin{smallmatrix} 30 \\ k \end{smallmatrix}}}_2\}_{k=1}^{30}$.
%\end{example}
\begin{example}
For $n=100$, we have $\mu_{100,2}=2.067$, then $k_{100,2}=2$ or $3$. Indeed ${\bbrack{\begin{smallmatrix} 100 \\ 2 \end{smallmatrix}}}_2=1.4239\cdot10^{312}$ is the maximum of the sequence $\{{\bbrack{\begin{smallmatrix} 100 \\ k \end{smallmatrix}}}_2\}_{k=1}^{100}$.
\end{example}
Notice that, $\mu_{n,2}$ increases slowly with $n$, its minimum value is $1.7$ and its maximum is  $2.074$. For any fixed $n\geq3$, we find that the maximum of the sequence  $\{{\bbrack{\begin{smallmatrix} n \\ k \end{smallmatrix}}}_2\}_{k=1}^{n}$ occurs at  $k=2$.

\textbf{Case 3:} When $s=3$, some values of ${\bbrack{\begin{smallmatrix} n \\ k \end{smallmatrix}}}_3$ are illustrated in Table 1 for $3\leq n\leq 12$.
{\footnotesize
\begin{table}[!ht]
\centering
\caption{Values of ${\bbrack{\begin{smallmatrix} n \\ k \end{smallmatrix}}}_3$ for $3\leq n\leq 12$}\label{tab2}
\begin{tabular}[p]{| l|c | c| c| c| c| c| c| c| } \hline \hline
$n\,/\,k$ &1  &2  &3   &4    &5    &6         \\ \hline
3 &8         &9        &1         &0        &0           &0                 \\ \hline
4 &216       &251      &36        &1        &0           &0                 \\ \hline
5&13820     &16280    &2555      &100      &1           &0                 \\
6&$1.728\cdot10^6$    &$2.049\cdot10^6 $       &$3.357\cdot10^5 $      &15060     &225 &1     \\ \hline
7&$3.732\cdot10^8$   &$4.443\cdot10^8 $   &$7.455\cdot10^7 $  &$3.588\cdot10^6$    &63660&     441       \\ \hline
8 &$1.280\cdot10^{11}$   &$1.528\cdot10^{11}$     &$2.602\cdot10^{10}$    &$1.305\cdot10^9$ & $2.542\cdot10^7$  &$ 2.149\cdot10^5 $  \\ \hline
9 &$6.555\cdot10^{13}$&$7.834\cdot10^{13}$ &$1.347\cdot10^{13}$ &$6.942\cdot10^{11}$ &$1.432\cdot10^{10}$ & $ 1.355\cdot10^8 $ \\ \hline
10 &$4.778\cdot10^{16 }$&$5.718\cdot10^{16}$ &$9.900\cdot10^{15}$ &$5.196\cdot10^{14}$ &$1.113\cdot10^{13}$ &$1.131\cdot10^{11}$ \\ \hline
11 & $4.778\cdot10^{19}$ &$ 5.722\cdot10^{19}$& $9.957\cdot10^{18}$ &$5.295\cdot10^{17}$ &$1.165\cdot10^{16 }$&$1.242\cdot10^{14}$  \\ \hline
12 &$6.360\cdot10^{22}$ & $7.621\cdot10^{22}$ &$1.331\cdot10^{22}$ &$7.147\cdot10^{20}$&$1.604\cdot10^{19}$ &$1.770\cdot10^{17}$  \\ \hline
\end{tabular}
\end{table}}
It is shown that the maximum of each row of the sequence  $\{{\bbrack{\begin{smallmatrix} n \\ k \end{smallmatrix}}}_3\}$, $ 3 \leq n\leq 12$ occurs at index $k=2$. In fact, we obtain the same maximizing index for larger values of $n$.

It is worth noting that for fixed $n\geq 3$, we have $\mu_{n,2}>\mu_{n,3}>\mu_{n,4}>\cdots$. In Table 2, the minimum and the maximum of $\mu_{n,s}$ for $3 \leq s \leq 8$ are given.
\begin{table}[!ht]
\centering
\caption{Values of the minimum and the maximum of $\mu_{n,s}$ for $3 \leq s \leq 8$ }\label{tab2}
\begin{tabular}{| c|c | c| c| c| c| c|} \hline
 &$\mu_{n,3}$ &$\mu_{n,4}$  & $\mu_{n,5}$ & $\mu_{n,6}$ &$\mu_{n,7}$ & $\mu_{n,8}$   \\ \hline
\textnormal{Minimum}   &1.611 &1.559  &1.530 &1.515  &1.508 &1.504 \\
 \textnormal{Maximum}   &1.686  &1.578  &1.536  &1.517  &1.508  &1.504  \\ \hline
\end{tabular}
\end{table}

In fact, we can see that $\mu_{n,s}=1.5+\sum_{i=2}^{n-1} \frac{1}{1+i^s}$ is converging to the number $1.5$ as $s$ increasing. So, the value of the maximizing index must be equal to $1$ or $2$.  According to (\ref{s1}) and (\ref{s2}), we have ${\bbrack{\begin{smallmatrix} n \\ 1 \end{smallmatrix}}}_s <{\bbrack{\begin{smallmatrix} n \\ 2 \end{smallmatrix}}}_s$, therefore we get the following result.
\begin{corollary}
For any fixed integer $n\geq 3$ and any integer $s\geq 2$, the maximum value of the sequence $\{{\bbrack{\begin{smallmatrix} n \\ k \end{smallmatrix}}}_s\}_{k=1}^n$ is given by ${\bbrack{\begin{smallmatrix} n \\ 2 \end{smallmatrix}}}_s=\left((n-1)!\right)^s\, H_{n-1}^{(s)} $.
\end{corollary}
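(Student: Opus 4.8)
The plan is to combine the unimodality of each row $\{{\bbrack{\begin{smallmatrix} n \\ k \end{smallmatrix}}}_s\}_{k=1}^{n}$, which is already available, with the explicit small-$k$ values in (\ref{s1})--(\ref{s2}). By Theorem~\ref{tlog} the coefficient sequence of the real-rooted polynomial (\ref{E:rcent1}) is strictly log-concave, hence unimodal, so each row has a single peak index $k_{n,s}$. Now a unimodal sequence $\{a_i\}$ with peak at index $m$ has $m=2$ exactly when $a_1<a_2$ (which forces $m\ge 2$) and $a_2>a_3$ (which forces $m\le 2$). Hence the corollary reduces to proving, for all $n\ge 3$ and all integers $s\ge 2$,
\[
{\bbrack{\begin{smallmatrix} n \\ 1 \end{smallmatrix}}}_s<{\bbrack{\begin{smallmatrix} n \\ 2 \end{smallmatrix}}}_s\qquad\text{and}\qquad{\bbrack{\begin{smallmatrix} n \\ 2 \end{smallmatrix}}}_s>{\bbrack{\begin{smallmatrix} n \\ 3 \end{smallmatrix}}}_s .
\]

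The first inequality is immediate from (\ref{s1}) and (\ref{s2}): the quotient ${\bbrack{\begin{smallmatrix} n \\ 2 \end{smallmatrix}}}_s/{\bbrack{\begin{smallmatrix} n \\ 1 \end{smallmatrix}}}_s$ equals $H_{n-1}^{(s)}\ge H_2^{(s)}=1+2^{-s}>1$ for every $n\ge 3$; this is exactly the remark made just before the corollary.

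The second inequality carries the real content. For $n=3$ it is trivial, since ${\bbrack{\begin{smallmatrix} 3 \\ 3 \end{smallmatrix}}}_s=1$ while ${\bbrack{\begin{smallmatrix} 3 \\ 2 \end{smallmatrix}}}_s=2^s+1$. For $n\ge 4$ I would use (\ref{s2}) and the displayed special value of ${\bbrack{\begin{smallmatrix} n \\ 3 \end{smallmatrix}}}_s$ to write the quotient ${\bbrack{\begin{smallmatrix} n \\ 3 \end{smallmatrix}}}_s/{\bbrack{\begin{smallmatrix} n \\ 2 \end{smallmatrix}}}_s$ as $\bigl(H_{n-1}^{(s)}\bigr)^{-1}\sum_{k=1}^{n-2}\frac{H_k^{(s)}}{(k+1)^2}$, and then use $H_k^{(s)}\le H_{n-1}^{(s)}$ for every $k\le n-2$ to obtain
\[
\frac{{\bbrack{\begin{smallmatrix} n \\ 3 \end{smallmatrix}}}_s}{{\bbrack{\begin{smallmatrix} n \\ 2 \end{smallmatrix}}}_s}\ \le\ \sum_{k=1}^{n-2}\frac{1}{(k+1)^2}\ =\ H_{n-1}^{(2)}-1\ <\ \zeta(2)-1\ =\ \frac{\pi^2}{6}-1\ <\ 1 .
\]
Hence ${\bbrack{\begin{smallmatrix} n \\ 3 \end{smallmatrix}}}_s<{\bbrack{\begin{smallmatrix} n \\ 2 \end{smallmatrix}}}_s$ for every $n\ge 3$; and since $s\ge 2$ gives $H_{n-1}^{(s)}\le H_{n-1}^{(2)}<\zeta(2)<2$, the same bound still holds if the exponent on $k+1$ in that special value is $s$ rather than $2$.

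Combining the two inequalities with unimodality pins the peak at $k_{n,s}=2$, so by (\ref{s2}) the maximal entry is ${\bbrack{\begin{smallmatrix} n \\ 2 \end{smallmatrix}}}_s=\bigl((n-1)!\bigr)^s H_{n-1}^{(s)}$, which is the assertion. The one step that needs care is the uniform estimate of the $k=3$ entry: the margin there is genuinely used, so one must retain the bound $\zeta(2)-1<1$ rather than settle for a weaker tail estimate; everything else is bookkeeping plus the Newton/Darroch input already quoted. As a consistency check, Theorem~\ref{mod} gives $\mu_{n,s}=\tfrac32+\sum_{j=2}^{n-1}(1+j^s)^{-1}<\tfrac32+\bigl(\tfrac{\pi^2}{6}-1\bigr)<2.15$, so $|k_{n,s}-\mu_{n,s}|<1$ already traps the maximizing index in $\{1,2,3\}$, here sharpened to $k_{n,s}=2$.
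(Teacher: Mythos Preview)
Your argument is correct and, in fact, more direct than the paper's. The paper proceeds through Darroch's theorem (Theorem~\ref{mod}): it computes $\mu_{n,s}=\sum_{i=1}^{n}(1+(i-1)^s)^{-1}$, observes that this quantity hovers near $1.5$ (with tabulated values for small $s$), and concludes that the maximizing index must be $1$ or $2$, whence the inequality ${\bbrack{\begin{smallmatrix} n \\ 1 \end{smallmatrix}}}_s<{\bbrack{\begin{smallmatrix} n \\ 2 \end{smallmatrix}}}_s$ finishes the job. As you note in your own consistency check, Darroch alone only traps $k_{n,s}$ in $\{1,2,3\}$ for $s=2$ (since $\mu_{n,2}$ can exceed $2$), so the paper's deduction that the index lies in $\{1,2\}$ for \emph{all} $s\ge 2$ rests partly on numerical inspection rather than a clean inequality.

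Your route bypasses Darroch entirely: unimodality plus the two local comparisons ${\bbrack{\begin{smallmatrix} n \\ 1 \end{smallmatrix}}}_s<{\bbrack{\begin{smallmatrix} n \\ 2 \end{smallmatrix}}}_s$ and ${\bbrack{\begin{smallmatrix} n \\ 2 \end{smallmatrix}}}_s>{\bbrack{\begin{smallmatrix} n \\ 3 \end{smallmatrix}}}_s$ pin the peak at $k=2$ outright. The key estimate $\sum_{k=1}^{n-2}H_k^{(s)}/(k+1)^{s}\le H_{n-1}^{(s)}\bigl(H_{n-1}^{(s)}-1\bigr)<H_{n-1}^{(s)}\bigl(\zeta(2)-1\bigr)<H_{n-1}^{(s)}$ is clean and uniform in $n$ and $s\ge 2$. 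Two small remarks: the displayed special value for ${\bbrack{\begin{smallmatrix} n \\ 3 \end{smallmatrix}}}_s$ in the paper carries a typographical exponent $2$ where it should read $s$ (as the elementary-symmetric-function derivation confirms), so your parenthetical ``if the exponent is $s$ rather than $2$'' is in fact the correct case; and your justification of that case is right, since the bound becomes $H_{n-1}^{(s)}-1<\zeta(s)-1\le \zeta(2)-1<1$. What you gain over the paper is a fully self-contained inequality chain with no appeal to tabulated values; what the paper's approach buys is a general localization principle (Theorem~\ref{mod}) that would continue to give information even in variants where the closed forms (\ref{s1})--(\ref{s2}) are unavailable.
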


\section{Conclusions}
In this paper, we expressed the probability function and the mean of the number of upper records $N_n$ in terms of the Stirling numbers of the first kind with level $s$. Consequently, the probability function of the record time $L_n$ is also obtained in terms of the Stirling numbers of the first kind with level $s$.  Finally, we determined the location of the maximum of the Stirling numbers of the first kind with level $s$.

\makeatletter
\renewcommand{\@biblabel}[1]{[#1]\hfill}
\makeatother

\end{document}